\newtheorem{theorem}{Theorem}[section]
\newtheorem{proposition}[theorem]{Proposition}
\newtheorem{question}[theorem]{Question}
\theoremstyle{definition}
\newtheorem{example}[theorem]{Example}
\newcommand{\RR}{{\mathbb R}}
\newcommand{\ZZ}{{\mathbb Z}}
\theoremstyle{remark}
\newtheorem{remark}[theorem]{Remark}
\begin{document}

\title[The minimal $n$ for equivariant embeddings  $B_g\to S^n$ is $n=2g-1$]
{The minimal dimension of a sphere with an equivariant embedding of the bouquet of $g$ circles is $2g-1$}

\author{Zhongzi Wang}
\address{Department of Mathematics Science, Tsinghua University, Beijing, 100080, CHINA}
\email{wangzz18@mails.tsinghua.edu.cn}

\subjclass[2010]{Primary 57M25; Secondary 57S17, 57S25, 05C10}

\keywords{Groups acting on finite graphs, equivariant embeddings into spheres}


\begin{abstract} To embed the bouquet of $g$ circles $B_g$ into the $n$-sphere $S^n$ so that its full symmetry group action extends to an orthogonal actions on $S^n$, the minimal $n$ is $2g-1$. This answers a question raised by B. Zimmermann.
\end{abstract}

\date{}
\maketitle

In this note  graphs  are finite and connected and  group actions are faithful. Denote the $n$-dimensional Euclidean space and
orthogonal group by $\RR^n$ and $O(n)$, and the unit sphere in $\RR^{n+1}$ by $S^n$.
 
Graphs belong to  a most  familiar geometric subject to us and the symmetries of graphs  is a big and active research area. 

For a given graph $\Gamma$ 
with an action of a finite group $G$, an embedding $e: \Gamma \to S^n$ is $G$-equivariant, if there is a subgroup $G \subset O(n+1)$ so that $G$ acts on the pair $(S^n, e(\Gamma))$
satisfying  $g(e(x))=e(g(x))$ for any $x\in \Gamma$ and $g\in G$.
In the definition of $G$-equivariant embedding, one may  replace $(S^n, O(n+1))$ by $(\RR^n, O(n))$.
See Remark \ref{definitions} for  relations between the two definitions.

The existence of $G$-equivariant embedding for any pair $(G,\Gamma)$ is known,  see \cite{Mos} and \cite{Pa}.
On the other hand, for given $\Gamma$ and $G$, 
usually it is difficult to find the minimal $n$ so that there is an $G$-equivariant embedding $\Gamma\to \RR^n$.

\begin{figure}[h]
\includegraphics{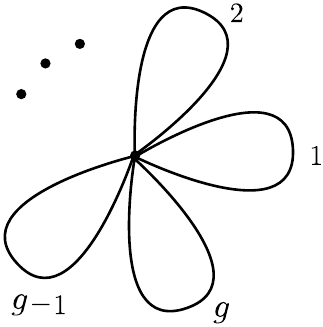}
\caption{A bouquet of $g$ circles: }
\end{figure}

The most significant finite group associated to a graph $\Gamma$ is  its full symmetry group $\text{Sym}(\Gamma)$.
The simplest and also most symmetric graph is the bouquet of $g$ circles (the graph with one vertex and $g$ closed  edges, see Figure 1),
denoted as $B_g$. $B_g$ is the simplest in the sense that $B_g$ has only one vertex. $B_g$ is the most symmetric in the  sense that:
for $g\ge3$, the maximum order of finite 
group action on a hyperbolic graph of rank $g$ (defined as its fundamental group rank) is $2^gg!$, which 
is  realized uniquely by the action of  $\text{Sym}(B_g)$ on $B_g$ \cite{WZ}.
A graph $\Gamma$ is hyperbolic if it has rank $>1$  and  has no free edge. See Example 0.6 for the rank 2 case.

We will simply use equivariant embedding of $\Gamma$ for $\text{Sym}(\Gamma)$-equivariant embedding of $\Gamma$.

Recently B. Zimmermann asked a very interesting question (as commented by the Math. Review of the paper).

\begin{question} (\cite{Zi})
What is the minimal $n$ so that there is an equivariant embedding $B_g\to S^n$? 
\end{question}

Zimmermann has pointed that $n\ge g-1$  from a group theoretical reason.
He also described an equivariant embedding of $B_g$ into $S^{2g}$. So he concluded the minimal $n$ is between $g-1$ and $2g$ 
\cite{Zi}. 
We will answer this question.

\begin{theorem}\label{main}
The minimal $n$ so that there is an equivariant embedding $B_g\to S^n$ is $2g-1$.
\end{theorem}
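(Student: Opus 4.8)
The plan is to prove the two inequalities $n\le 2g-1$ and $n\ge 2g-1$ separately: the first by an explicit construction, the second by a representation‑theoretic obstruction built from the loop structure of $B_g$. For the upper bound I would identify $\RR^{2g}$ with $\CC^{g}$ and let $\text{Sym}(B_g)=(\ZZ/2)^{g}\rtimes S_g$ act orthogonally by having $S_g$ permute the $g$ complex coordinates and having the $j$‑th standard generator $\tau_j$ of $(\ZZ/2)^{g}$ act by complex conjugation in the $j$‑th coordinate. One then sends the vertex of $B_g$ to $v=\tfrac1{\sqrt g}(1,\dots,1)$ and the $j$‑th loop to $\theta\mapsto\tfrac1{\sqrt g}(1,\dots,1,e^{i\theta},1,\dots,1)$, with $e^{i\theta}$ in the $j$‑th slot. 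A direct check shows that the image lies in $S^{2g-1}$, that the map is injective, that distinct loops meet only at $v$, and that the action is as required ($\tau_j$ reverses the $j$‑th loop while fixing the others pointwise, and $S_g$ permutes the loops). This produces an equivariant embedding $B_g\to S^{2g-1}$.

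For the lower bound, suppose $e\co B_g\to S^{n}\subset\RR^{n+1}$ is an equivariant embedding with $G=\text{Sym}(B_g)=(\ZZ/2)^{g}\rtimes S_g\subset O(n+1)$. Since $e(B_g)$ is $G$‑invariant, I would replace $\RR^{n+1}$ by $W=\text{span}\,e(B_g)$, on which $G$ still acts orthogonally; it then suffices to show $\dim W\ge 2g$. Write $c_j$ for the $j$‑th loop and let $\tau_1,\dots,\tau_g$ generate $(\ZZ/2)^{g}$, so $\tau_j$ reverses $c_j$ and is the identity on the other loops. Decompose $W=\bigoplus_{T\subseteq\{1,\dots,g\}}R_T$ into isotypic summands for $(\ZZ/2)^{g}$, where $\tau_i$ acts as $+1$ on $R_T$ if $i\notin T$ and as $-1$ if $i\in T$. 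Since $\tau_i$ fixes $e(c_j)$ pointwise whenever $i\ne j$, we get $e(c_j)\subseteq\bigcap_{i\ne j}\text{Fix}(\tau_i)=R_\emptyset\oplus R_{\{j\}}$, and because the loops span $W$ this forces $R_T=0$ for $|T|\ge 2$, so
\[
W=R_\emptyset\oplus\bigoplus_{j=1}^{g}R_{\{j\}}.
\]
Here $R_{\{j\}}\ne 0$, since otherwise $\tau_j$ would fix $e(c_j)\subseteq R_\emptyset$ pointwise, contradicting that $\tau_j$ acts on that circle as a nontrivial involution (orientation‑reversing, as it fixes the vertex); and $S_g$ permutes the $R_{\{j\}}$ transitively and preserves $R_\emptyset$, so all $R_{\{j\}}$ have a common dimension $d\ge 1$ and $\dim W=\dim R_\emptyset+gd$. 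Thus it remains to prove $\dim R_\emptyset\ge g$.

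To do this I would use that the orthogonal projection $\pi\co W\to R_\emptyset$ is $G$‑equivariant (because $G$ preserves $R_\emptyset$), so $R_\emptyset$ is the $S_g$‑submodule generated by $\pi(e(c_1))$, and that the stabilizer $S_{g-1}\le S_g$ of $c_1$ fixes $c_1$ pointwise, whence $\pi(e(c_1))\subseteq R_\emptyset^{\,S_{g-1}}$. By Frobenius reciprocity the only real irreducible $S_g$‑modules with a nonzero $S_{g-1}$‑fixed vector are the trivial one $\mathbf 1$ and the standard $(g-1)$‑dimensional one $V$ (the constituents of $\mathrm{Ind}_{S_{g-1}}^{S_g}\mathbf 1$), so $R_\emptyset\cong\mathbf 1^{\oplus p}\oplus V^{\oplus q}$. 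Now $p\ge 1$, because the vertex maps to the nonzero vector $v\in R_\emptyset$ fixed by all of $G$; and $q\ge 1$, for if $S_g$ acted trivially on $R_\emptyset$ then, picking $\sigma_j\in S_g$ with $\sigma_j(c_1)=c_j$, the second fixed point $m_j\in R_\emptyset$ of $\tau_j=\sigma_j\tau_1\sigma_j^{-1}$ on $e(c_j)$ would equal $\sigma_j(m_1)=m_1$, so every loop would pass through $m_1\ne v$ and $e(c_1),e(c_2)$ would meet in the two points $v,m_1$, contradicting injectivity of $e$. Hence $\dim R_\emptyset\ge p+q(g-1)\ge g$ for $g\ge 2$ (the case $g=1$ is trivial), so $\dim W\ge g+g=2g$ and $n\ge 2g-1$. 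The main obstacle is this last step: recognizing $R_\emptyset$ as a sum of trivial and standard $S_g$‑modules via the subgroup‑fixed loops and Frobenius reciprocity, and excluding $q=0$ by the double‑point argument; the supporting bookkeeping is verifying that the isotypic projections are $G$‑equivariant so that the sign constraints from the $\tau_j$ and the transitivity of $S_g$ descend to the summands $R_\emptyset$ and $R_{\{j\}}$.
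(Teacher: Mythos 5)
Your proposal is correct, and it establishes the theorem by the same two-sided split as the paper, but one half of the lower bound is argued by a genuinely different method. Your upper bound is, after identifying $\RR^{2g}$ with $\CC^g$, exactly the paper's second construction: $g$ circles of radius $1/\sqrt g$, one in each coordinate plane of $\RR^{2g}=\RR^2_1\times\cdots\times\RR^2_g$, all passing through $v=(1/\sqrt g,0,\dots,1/\sqrt g,0)$, with $S_g$ permuting the planes and the $i$-th involution acting by $y_i\mapsto -y_i$. For the lower bound, both arguments decompose the ambient space into the $(\ZZ_2)^g$-fixed part and a complement and show each has dimension at least $g$, and both ultimately rest on the same geometric input (the second fixed points of the $\rho_i$ are pairwise distinct because $e$ is injective); the difference is in how the two estimates are obtained. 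For the complement, the paper simultaneously diagonalizes the faithful $(\ZZ_2)^g$-action on $V^\perp$ and counts sign patterns ($2^g\le 2^q$), whereas you identify exactly which nontrivial characters occur, namely the $g$ characters $\chi_{\{j\}}$, each with $R_{\{j\}}\ne 0$ because $\tau_j$ is not the identity on $e(c_j)$. For the fixed part, the paper proves by an elementary manipulation with the transpositions $(j,g)$ that $v,p_1,\dots,p_{g-1}$ are linearly independent, while you invoke Frobenius reciprocity to see that $R_\emptyset$ is a sum of trivial and standard $S_g$-modules and exclude the all-trivial case by the double-point argument. The paper's route is more elementary and self-contained; yours is shorter once the decomposition $\mathrm{Ind}_{S_{g-1}}^{S_g}\mathbf 1\cong\mathbf 1\oplus V$ is taken for granted, and it makes the representation-theoretic reason for the sharp bound $2g$ (one trivial plus one standard summand on the fixed part, $g$ distinct sign characters on the complement) fully explicit.
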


Theorem \ref{main} follows from the next two propositions.

\begin{proposition}\label{main1}
Suppose there is an equivariant embedding of $B_g$ into $S^n$, then  $n\ge 2g-1$.
\end{proposition}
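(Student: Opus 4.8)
The plan is to exhibit, inside the orthogonal $\RR^{n+1}$ coming from the embedding, two non-isomorphic irreducible subrepresentations of $W:=\mathrm{Sym}(B_g)\cong\ZZ_2\wr S_g$, of dimensions $g$ and $g-1$; since non-isomorphic irreducible subrepresentations of a representation intersect only in $0$, this forces the ambient dimension to be at least $g+(g-1)=2g-1$. To set up, write $W$ with its standard generators: the loop-flips $\tau_1,\dots,\tau_g$ (with $\tau_i$ reversing $e_i$ and fixing the other loops pointwise) and the loop-permutations $S_g$. An equivariant embedding $e\colon B_g\to S^n$ gives $W\subset O(n+1)$ acting on $\RR^{n+1}$ with $g\cdot e(x)=e(gx)$. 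The unique vertex $v$ is $W$-fixed, so $p_0:=e(v)$ is a $W$-fixed unit vector and $V:=p_0^{\perp}\subset\RR^{n+1}$ is a $W$-invariant subspace with $\dim V=n$; it suffices to find both subrepresentations inside $V$.

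For the $g$-dimensional piece I would fix $\delta\in(0,\tfrac12)$ and let $p_i^{+},p_i^{-}$ be the two points of $e_i$ at arc-distance $\delta$ from $v$. These $2g$ points form a single $W$-orbit, with point-stabilizer $\cong\ZZ_2\wr S_{g-1}$, and $W$ permutes them exactly as it permutes $\{\pm e_1,\dots,\pm e_g\}$ by signed permutations, $\tau_i$ interchanging $p_i^{+}\leftrightarrow p_i^{-}$. Hence the permutation module $\RR[\{p_i^{\pm}\}]$ contains, as a direct summand spanned by the differences $[p_i^{+}]-[p_i^{-}]$, a copy of the defining signed-permutation representation $B\cong\RR^g$ of $W$, which is irreducible (from any $0\ne u\in B$ with nonzero $i$-th coordinate, $\tfrac12(u-\tau_i u)$ isolates the $i$-th basis vector, and its $S_g$-translates span $B$). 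Composing the evaluation $[p]\mapsto e(p)$ with the $W$-equivariant orthogonal projection $\RR^{n+1}\to V$ and restricting to $B$ yields the $W$-homomorphism $[p_i^{+}]-[p_i^{-}]\mapsto\beta_i:=e(p_i^{+})-e(p_i^{-})$; here $\beta_i\in V$, since its $p_0$-component vanishes because $\tau_i$ swaps $p_i^{\pm}$ and fixes $p_0$, and $\beta_i\ne 0$ because $e$ is injective. A nonzero homomorphism out of an irreducible module is injective, so $V$ contains a copy of $B$.

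For the $(g-1)$-dimensional piece I would use the midpoints $m_i$ of the loops $e_i$. Every $\tau_j$ fixes every $m_i$ (if $j\ne i$ because $\tau_j$ fixes $e_i$ pointwise, and $\tau_i$ because it reverses $e_i$), so $\langle\tau_1,\dots,\tau_g\rangle$ fixes each $e(m_i)$, while $S_g$ permutes $\{m_1,\dots,m_g\}$ in the standard way. Transitivity of this $S_g$-action together with $S_g\cdot p_0=p_0$ forces the $p_0$-components of the $e(m_i)$ to be a common scalar $c'$, so the vectors $y_i:=e(m_i)-c'p_0$ lie in $V$ and are pairwise distinct because the $e(m_i)$ are. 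Thus $[m_i]\mapsto y_i$ defines a $W$-homomorphism from the permutation representation $\RR^g=\mathrm{triv}\oplus\mathrm{std}_{g-1}$ of $S_g$ (inflated to $W$, each $\tau_j$ acting trivially) into $V$; since the $y_i$ are not all equal, its restriction to the irreducible summand $\mathrm{std}_{g-1}$ is nonzero, hence injective, so $V$ also contains a copy of $\mathrm{std}_{g-1}$. Finally $B$ and $\mathrm{std}_{g-1}$ are irreducible of different dimensions, hence non-isomorphic, so their copies in $V$ meet in $0$, giving $n=\dim V\ge \dim B+\dim\mathrm{std}_{g-1}=g+(g-1)=2g-1$.

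The main obstacle is not any single hard calculation but the care needed in the two structural inputs: the representation theory of $\ZZ_2\wr S_g$ (irreducibility of the defining representation $B$ and of $\mathrm{std}_{g-1}$, and the fact $B\not\cong\mathrm{std}_{g-1}$), and the combinatorial identification of how $\mathrm{Aut}(B_g)$ acts on the $2g$ near-vertex points and on the $g$ midpoints, which comes down to computing the relevant point-stabilizers; once these are in hand, everything else is formal. For $g=1$ there is no $\mathrm{std}_0$ summand and the bound $n\ge 1$ comes from the copy of $B$ (the sign representation) alone, while for $g\ge 2$ both summands are genuinely needed — presumably the reason the sharp value $2g-1$ exceeds Zimmermann's lower bound $g-1$.
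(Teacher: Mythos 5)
Your proof is correct, but it takes a genuinely different route from the paper's. The paper works with the $g+1$ points $v,p_1,\dots,p_g$, where $p_i$ is the second fixed point of the flip $\rho_i$ (your midpoint $m_i$): it shows by a direct computation with the transpositions $(j,g)$ that $v,p_1,\dots,p_{g-1}$ are linearly independent, so their span $V$ has dimension at least $g$; it then observes that $(\ZZ_2)^g$ acts trivially on $V$, hence faithfully on the orthogonal complement $V^\perp$, and a faithful orthogonal action of $(\ZZ_2)^g$ needs at least $g$ further dimensions by simultaneous diagonalization. Your first summand $\mathrm{std}_{g-1}$ (built from the midpoints) is essentially the paper's linear-independence step recast in representation-theoretic language, but your second summand is obtained quite differently: instead of counting the dimension of a faithful representation of $(\ZZ_2)^g$ on the complement, you manufacture an explicit copy of the $g$-dimensional signed-permutation representation from an auxiliary orbit of $2g$ points near the vertex, and you separate the two pieces by non-isomorphism of irreducibles (via Schur) rather than by orthogonal complementation. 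Your version costs a little more machinery (irreducibility of the two constituents, Schur's lemma) but is more conceptual and transplants readily to other pairs $(G,\Gamma)$ in which orbits of points force prescribed irreducible constituents; the paper's version is bare-hands linear algebra using only faithfulness. One small point you should make explicit: the asserted orbit structure of the points $p_i^{\pm}$ (that the permutation symmetries send $p_i^{\pm}$ to $p_{\sigma(i)}^{\pm}$) relies on the standard model of the $\mathrm{Sym}(B_g)$-action in which permutations act by arc-length-preserving, orientation-preserving maps between the circles --- the same normalization the paper adopts implicitly --- or else one should simply define $p_i^{-}:=\rho_i(p_i^{+})$ and transport by the permutation symmetries; either way this is a normalization, not a gap.
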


\begin{proposition}\label{main2}
 There are equivariant embeddings of $B_g$ into $S^{2g-1}$ for each $g>1$.
\end{proposition}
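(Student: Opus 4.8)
The plan is to exhibit, for each $g>1$, an explicit faithful orthogonal action of $G:=\text{Sym}(B_g)\cong(\ZZ/2)^g\rtimes S_g$ (the hyperoctahedral group, of order $2^g g!$) on $\RR^{2g}$ together with a $G$-invariant embedded copy of $B_g$ inside the unit sphere $S^{2g-1}$. Write $\RR^{2g}=\RR^g\oplus\RR^g=\CC^g$ with real coordinates $(x_1,\dots,x_g,y_1,\dots,y_g)$, and let $G$ act by letting each sign generator $\tau_i$ (the reversal of the $i$-th loop of $B_g$) negate $x_i$ and fix all other coordinates, while $\sigma\in S_g$ permutes the indices of the $x$'s and of the $y$'s simultaneously; equivalently the $x$-block carries the signed permutation representation and the $y$-block the permutation representation, and $G$ acts on $\CC^g$ by permuting coordinates and conjugating individual ones. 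This action is orthogonal and faithful, hence restricts to $S^{2g-1}$.

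Next I would place the cells of $B_g$. The unique vertex goes to the $G$-fixed unit vector $p:=g^{-1/2}(0,\dots,0,1,\dots,1)$. The midpoint of the $i$-th loop goes to $m_i:=e_{y_i}$, the $i$-th standard basis vector of the $y$-block; the $m_i$ form a single $S_g$-orbit of pairwise distinct unit vectors, each fixed by every $\tau_j$, matching the fact that in $B_g$ the midpoint of the $i$-th loop is fixed exactly by $(\ZZ/2)^g\rtimes S_{g-1}$. Here $g>1$ is used to guarantee $p\neq m_i$. The $i$-th loop, parametrized by $\theta\in\RR/2\pi\ZZ$ with basepoint at $\theta=0$, is sent to the circle
\[
c_i(\theta)\;=\;\tfrac12(p+m_i)\;+\;\tfrac{\cos\theta}{2}(p-m_i)\;+\;\tfrac{|p-m_i|}{2}\,\sin\theta\;e_{x_i},
\]
which passes through $p$ at $\theta=0$ and through $m_i$ at $\theta=\pi$, and which is an embedded circle since $p\neq m_i$. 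The one computation that makes everything work is that $|c_i(\theta)|\equiv 1$: since $|p|=|m_i|=1$ and $e_{x_i}\perp p,m_i$, one gets $|c_i(\theta)|^2=\tfrac14|p+m_i|^2+\tfrac14|p-m_i|^2=\tfrac12(|p|^2+|m_i|^2)=1$ by the parallelogram identity. Thus $C_i:=c_i(\RR/2\pi\ZZ)\subset S^{2g-1}$.

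It then remains to check three things. Equivariance: since $\sigma$ fixes $p$, sends $m_i\mapsto m_{\sigma(i)}$ and $e_{x_i}\mapsto e_{x_{\sigma(i)}}$ and preserves norms, it carries $c_i(\theta)$ to $c_{\sigma(i)}(\theta)$; since $\tau_i$ fixes $p,m_i$ and negates $e_{x_i}$, it carries $c_i(\theta)$ to $c_i(-\theta)$, matching the reversal of the $i$-th loop; and for $j\neq k$ the coordinate $x_j$ vanishes on $p$, $m_k$ and $e_{x_k}$, so $\tau_j$ fixes $C_k$ pointwise, matching the fact that reversing the $j$-th loop fixes the $k$-th loop. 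Disjointness: if $c_j(\theta)=c_k(\phi)$ with $j\neq k$, comparing $x_j$-coordinates gives $\sin\theta=0$, so $c_j(\theta)\in\{p,m_j\}$, and symmetrically $c_k(\phi)\in\{p,m_k\}$; since $p,m_j,m_k$ are pairwise distinct, the common point is $p$. Hence $e\colon B_g\to S^{2g-1}$ sending the vertex to $p$ and the $i$-th loop by $c_i$ is well defined, continuous and injective, so (by compactness of $B_g$) an embedding, and it is $G$-equivariant with $G$ permuting $\{p\}\cup\bigcup_i C_i$; with Proposition \ref{main1} this completes Theorem \ref{main}.

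The step I expect to be the real obstacle is not any of these verifications but the design behind the representation and the placement: the requirement that $\tau_j$ fix the $k$-th loop \emph{pointwise} for every $k\neq j$ forces each $C_k$ into the codimension-$(g-1)$ subspace $\{x_j=0:\ j\neq k\}$ and forces the vertex and all midpoints into the common subsphere $\{x_1=\dots=x_g=0\}\cap S^{2g-1}$; once one accepts this, there is essentially no freedom left, and the displayed circle — together with the parallelogram-law coincidence keeping it on the sphere — is what one is driven to. The conceptual content is recognizing that $2g$ ambient dimensions suffice, i.e. that no extra dimensions are needed to ``unlink'' the loops.
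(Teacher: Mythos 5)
Your construction is correct and is essentially the paper's own second construction: an explicit orthogonal action of $(\ZZ_2)^g\rtimes S_g$ on $\RR^{2g}$ by signed permutations of coordinates, with $g$ round circles meeting only at a common fixed point and lying on the unit sphere $S^{2g-1}$ (the paper places each $C_i$ in an affine coordinate $2$-plane with radius $1/\sqrt{g}$, whereas you place it in the plane spanned by $p-m_i$ and $e_{x_i}$ and invoke the parallelogram law, but the verifications are the same). The paper also gives a separate first construction directly in $\RR^{2g-1}$, using the vertices of a regular simplex for the $S_g$-action and passing to $S^{2g-1}$ by one-point compactification, which your argument does not need.
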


 \begin{proof}[{Proof of Proposition \ref{main1}}] 
We need  first recall a careful description of the action of $\text{Sym}(B_g)$ on $B_g$. 
Denote the vertex of $B_g$ by $v$ and the $g$ circles by $C_1,..., C_g$, see Figure 2. 
From now on we consider that each circle $C_i$ is oriented. 
It is known that $\text{Sym}(B_g)$ is the semi-direct product  $(\ZZ_2)^g\ltimes S_g$, where $S_g$ is the permutation group of $g$ elements,
which permutes those $g$ oriented circles,  and the normal subgroup $(\ZZ_2)^g$ acts by orientation reversing involutions  of those $g$ oriented circles.

\begin{figure}[h]
\includegraphics{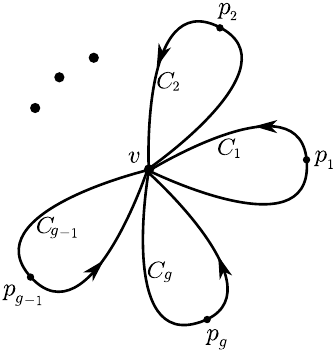}
\caption{Detailed description $\text{Sym}(B_g)$ action on $B_g$}
\end{figure}

More precisely, for each $\sigma\in  S_g$, denote the corresponding action on $B_g$ by $\tau_\sigma$,  which sends $C_i$ to $C_{\sigma(i)}$ and preserves the orientations.
Denote the element $\rho_i$  to be the element in $(\ZZ_2)^g\subset \text{Sym}(B_g)$, which is the orientation reversing involution on $C_i$
and is the identity on the remaining $C_j$, for $j\ne i$. Now $\rho_i$ has two fixed points, one is $v$, another we denote as $p_i$. 
It is clear that 
$$\tau_\sigma\rho_i\tau_{\sigma^{-1}}=\rho_{\sigma(i)}.    \qquad (1)$$
By (1) we have

$$\rho_{\sigma(i)}\tau_{\sigma}(p_i)=\tau_\sigma\rho_i\tau_\sigma^{-1}\tau_\sigma(p_i)=\tau_\sigma\rho_i(p_i)=\tau_\sigma(p_i).$$
So we have

$$\rho_{\sigma(i)}\tau_{\sigma}(p_i)=\tau_\sigma(p_i).$$

That is,  $\tau_\sigma(p_i)$ is the fixed point of $\rho_{\sigma(i)}$ on $C_{\sigma(i)}-\{v\}$. Since the fixed point of $\rho_{\sigma(i)}$ on $C_{\sigma(i)}-v$ is unique, 
we have 
$$\tau_\sigma(p_i)=p_{\sigma(i)}.\qquad (2)$$

Suppose now we have an equivariant embedding $B_g\subset  S^n$. That is to say we have embedded $\text{Sym}(B_g)$ into $O(n+1)$
which acts on the pair $(S^n, B_g)$ and the restriction on $B_g$ is the action of $\text{Sym}(B_g)$ described as above.
Since $S^n$ is the unit sphere of $\RR^{n+1}$ and $O(n+1)$ is the orthogonal transformation group  of $\RR^{n+1}$,
the action of $\text{Sym}(B_g)$ on $S^n$ extends to  a  $\text{Sym}(B_g)$-action on $\RR^{n+1}$ as an  orthogonal transformation group.

Consider each point in $\RR^{n+1}$ as a vector in $\RR^{n+1}$. Since $v, p_1, p_2, ..., p_g$ are in the unit sphere, each of them is a non-zero vector.
Let $$V=\left<v, p_1, p_2, ..., p_{g}\right>$$  be the subspace of $\RR^{n+1}$ spanned by the vectors $v, p_1, p_2, ..., p_g$. 

By (2), the $S_g$-action on $B_g$ permutes those vectors $p_1, p_2, ..., p_g$ and fixes $v$. Hence $S_g$ acts faithfully on $V$,
that is $\tau_\sigma|_V=\text{id}|_V$ if and only if $\sigma$ is the identity in $S_g$.

Now we show that the vectors $v, p_1,..., p_{g-1}$ are linearly independent:

Suppose $a, a_1,...,a_{g-1}$ are real numbers such that 

$$ av+\sum_{i=1}^{g-1}a_ip_i=0. \qquad (3)$$

For each $j\in \{1,2,..., g-1\}$, recall that $(j, g)\in S_g$ exchanges $j$ and $g$, and fixes all the remaining $i$,  and
$\tau_{(j,g)}$ corresponds to the element $(j,g)\in S_g$.
Since $\tau_{(j,g)}$ is an orthogonal transformation, $\tau_{(j,g)}$ is linear. Applying $\tau_{(j,g)}$ to (3), we have 
 
$$0=\tau_{(j,g)}(av+\sum_{i=1}^{g-1}a_ip_i)=a\tau_{(j,g)}(v)+\sum_{i=1}^{g-1}a_i\tau_{(j,g)}(p_i). \qquad (4)$$

Since $v$ is the common fixed point of $S_g$ and $\tau_{(j,g)}(p_i)=p_i$ if $j\ne i$ and $\tau_{(j,g)}(p_j)=p_g$, 
we have further

$$av+\sum_{i=1, {i\ne j}}^{g-1} a_ip_i + a_j p_g=0. \qquad (5)$$

On the other hand by (3) and some elementary transformations we have

  $$av+ \sum_{i=1, {i\ne j}}^{g-1} a_ip_i + a_j p_g=  av+\sum_{i=1}^{g-1} a_ip_i-a_j p_j + a_j p_g=a_j(p_g-p_j). \qquad (6)$$
  
  Combine (5) and (6), we have 
  $$a_j(p_g-p_j)=0.$$
  Since $B_g\subset \RR^{n+1}$ is an embedding, and $j\ne g$, we have 
  $$p_g-p_j\ne 0.$$
  Hence $a_j=0$. Since $j$ can be any element in $\{1,2,..., g-1\}$. Hence $a_j=0$ for each $j\in \{1,2,..., g-1\}$.
  By (3) we have 
  $$av=0.$$
  Since $v\in S^n$, $v\ne 0$, we have $a=0$.
  That is (3) implies that 
  $$a=a_1=a_2=...=a_{g-1}=0, \qquad (7)$$
  that is  $v, p_1,..., p_{g-1}$ are linearly independent.
  As a conclusion we have 
  
$$\text{dim} V\ge g.\qquad (8)$$

Let $V^\perp$ be orthogonal complement of $V$ in $\RR^{n+1}$. We have 
$$\RR^{n+1}=V\oplus V^\perp. \qquad (9)$$

For any $i\in \{1,2,..., g\}$,   $\rho_i$ is orthogonal, so is linear. Moreover each point in $\{v, p_1, p_2, ..., p_{g}\}$ is a fixed point of $\rho_i$.
Therefore  $\rho_i$
is the identity on $V$, the subspace spanned by $\{v, p_1, p_2, ..., p_{g}\}$. Further more we conclude that the 
action of the subgroup $(\ZZ_2)^g$ on $V$ is trivial. 
In particular $V$ is an invariant subspace of the $(\ZZ_2)^g$-action.

Since the $(\ZZ_2)^g$-action on $\RR^{n+1}$ is orthogonal and $V$ is invariant under the $(\ZZ_2)^g$-action, $(\ZZ_2)^g$ acts
orthogonally on the orthogonal 
complement $V^\perp$ of $V$.
Since the $(\ZZ_2)^g$-action is trivial on $V$ and is faithful on $\RR^{n+1}$,  the $(\ZZ_2)^g$-action must be  faithful on $V^\perp$
by (9).

Let  $q$ be the dimension of $V^{\perp}$, and consider $O(q)$ as $q$ by $q$ orthogonal  matrix group.
Then $(\ZZ_2)^g$-action on $V^\perp$ is by a matrix group in $O(q)$. 
Let $\Omega_q$ be the set of all $q$ by $q$   diagonal matrices whose entries on the diagonal of the matrices are either $1$ or $-1$.
Then there are exactly $2^q$  matrices in $\Omega_q$.

Each element in $(\ZZ_2)^g$ is of order 1 or 2, therefore it can be diagonalized, indeed into $\Omega_q$.
By linear algebra, a finite number of commuting diagonalizable matrices can be diagonalized simultaneously.  Since $(\ZZ_2)^g$ is a
finite abelian group,
we may assume that under a basis of $V^\perp$, all matrices in $(\ZZ_2)^g$ are diagonalized, therefore are elements in $\Omega_g$.
Since $(\ZZ_2)^g$ has $2^g$ elements, $(\ZZ_2)^g$ acts faithfully on $V^\perp$, the image of $(\ZZ_2)^g$ in $\Omega_g$ must be also $2^g$ 
pairwise 
different elements.
Therefore $2^g\le 2^q$, that is 
$$g\le q=\text {dim} V^\perp. \qquad (10)$$
By (8),  (9) and (10) we have 
$$n+1=\text {dim} V+\text {dim} V^\perp\ge 2g.$$
That is 
$$n\ge 2g-1.$$
We finish the proof.   \end{proof}
  
\begin{proof}[{Proof of Proposition \ref{main2}}] 
We will contruct two different equivariant embeddings.
The first one is
an  equivariant embedding $B_g\to \RR^{2g-1}$,  then the required $G$-equivariant embedding $B_g\to S^{2g-1}$ can be obtained by one point compactification
via the inverse of stereographic projection $p: S^{2g-1}\to \RR^{2g-1}$. 
The second one is an equivariant embedding of $B_g\to \RR^{2g}$, where $B_g$ stays equivariantly in the unit sphere $S^{2g-1}$. 
 
{\bf The first construction.} Before we give a general construction, we 
  like to  provide a visible equivariant embedding of $B_2\subset \RR^{3}$ in the familiar dimension in which we live.

Fix a standard $xyz$-coordinate system of $\RR^3$.
Let $C_1$ be the unit circle in the lower-half $zx$-plane which is tangent to the $x$-axis at the origin, and 
$C_2$ be the unit circle in the upper-half $yz$-plane which is tangent to the $y$-axis at the origin, as shown in Figure 3.
Then $C_1\bigcup C_2$ provides  an embedding $B_2\subset \RR^{3}$.
The action of  $\text{Sym}(B_2)=(\ZZ_2)^2\ltimes S_2$ is also realized by a subgroup of $O(3)$:
where the inversion of $C_1$ is given by the reflection about the $yz$-plane,  the inversion of $C_2$ is given by the 
reflection about the $zx$-plane, and the permutation of $C_1$ and $C_2$ is given by the $\pi$-rotation around a diagonal $L$ in the $xy$-plane.
This provides an equivariant embedding.

\begin{figure}[h]
\includegraphics{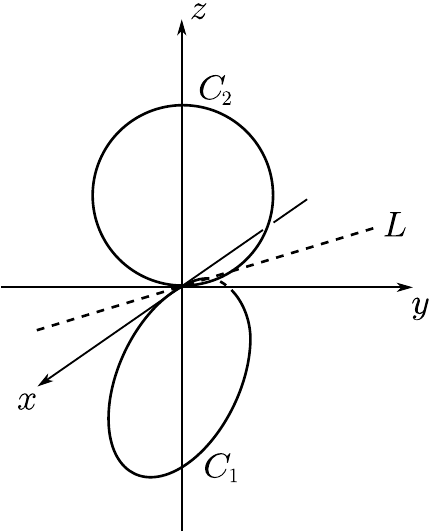}
\caption{An equivariant embedding of $B_2$ into $\RR^3$}
\end{figure}

Then we provide the equivariant embedding of $B_g\subset \RR^{2g-1}$ for $g>2$.

Let $V_1=\RR^{g-1}$ and $v_1, v_2,...,v_g$ be the vertices of a regular $(g-1)$-dimensional simplex $\Delta_{g-1}$ centered at the origin $O_1$ of 
$V_1$ with the length $|O_1v_i|=1$. Then $S_g$, the full symmetry group of $\Delta_{g-1}$, 
 is a subgroup of $ O(g-1)$.
Let $V_2=\RR^g$, and $e_1,e_2, ... , e_g$ be the standard orthogonal basis of $V_2$.
Let $V=V_1\times V_2$, then $V=\RR^{2g-1}$,  and $(v_i, e_i)$ is a standard orthogonal basis of the Euclidean plane $E_i$ spanned by $v_i$ and $e_i$, $i\in \{1,...,g\}$.

Define the isometrical embedding  $\iota_i: S^1 \to V$  by 
$$\iota_i(cos\theta, sin\theta)=(1+cos\theta)v_i+sin\theta e_i, \qquad (11)$$
where $0\le \theta < 2 \pi$ is the parameter on the unit circle.

Let $\iota_i(S^1)=C_i$, $i\in \{1,...,g\}$.  Clearly the origin $O$ of $V$ belongs to each $C_i$. Moreover $C_i$ belongs to $E_i$, and $E_i\cap E_j=O$
for $i\ne j$, so $C_i\cap C_j=O$ for $i\ne j$. Hence
$\bigcup_{i=1}^g C_i$ is a bouquet of $g$ circles embedded in $V$ with the origin $O$ as the vertex, still denoted as $B_g$. 
 
For each $i\in \{1,2,...,g\}$ and each $\sigma\in S_g$, we define  $\iota_i, \tau_\sigma \in O(2g-1)$ as follows    
$$\iota_i(v_j)=v_j\,\,  \text {for all $j$},  \,\, \iota_i(e_j)=e_j \,\,  \text {for $j\neq i$}; \,\,  \iota_i(e_i)=-e_i. \qquad (12)$$

$$ \tau_\sigma(e_i)=e_{\sigma(i)}, \,\, \tau_\sigma(v_i)=v_{\sigma(i)}. \qquad (13)$$

By (11), (12) and (13), one can check directly

(i) $\iota_i$ is the identity on $C_j$ for $j\ne i$ and is an orientation reversing involution on $C_i$.

(ii)  $\tau_\sigma(C_i)=C_{\sigma(i)}$ and $\tau^{-1}_\sigma(C_i)=C_{\sigma^{-1}(i)}$. 


Let $$H=\left< \iota_i, \tau_{\sigma}: \,\, 1\le i \le g, \,\, \sigma \in S_g \right>.$$
Then $H$ is a subgroup of $\text{Sym}(B_g)$, and each $h\in H$ has the form 
$$h=\tau_\sigma \prod_{i=1}^g \iota_i^{\alpha_i},$$
where $\alpha_i=0$ or 1, since $\left< \iota_i, \,\, 1\le i \le g \right>$ is a normal subgroup of $H$.
Clearly $H$ acts faithfully on $B_g$.
Moreover if  $h\in H$ is the identity on $B_g$, $h$ must fix all $v_i$ and $e_i$, which implies that $h$ is the identity on $V$.
So $H$ also acts faithfully on $V$. 
The restriction of $H$ on $B_g$ is $\text{Sym}(B_g)$, since it exhausts all symmetries of $\text{Sym}(B_g)$.

Therefore we get an equivariant embedding $B_g\to \RR^{2g-1}$.

{\bf The second construction.} View $\RR^{2g}$ as the product  
$$\RR^{2g}=\RR^2_1\times \RR^2_2\times ... \times \RR^2_g,$$
where each $\RR^2_i$ is an Euclidean plane with standard coordinate system $(x_i, y_i)$, $i=1,2,..., g$. 
Then $(x_1, y_1, x_2, y_2, ... , x_g, y_g)$ provides a standard coordinate system of the Euclidean space $\RR^{2g}$.
Let $C_i$ be the circle in $\RR^2_i\subset \RR^{2g}$ given by 
$$C_i=\{ x_i^2+y_i^2=\frac 1g, \,\, x_j=\frac 1{\sqrt g}, y_j=0\,\, \text{for remaining}\,\,  j\ne i.\} \qquad (13)$$ 
with anti-clockwise orientation in the $x_iy_i$-plane.

Now we make three observations:

(a) Clearly the point $$v=(\frac 1{\sqrt g}, 0, \frac 1{\sqrt g}, 0, ..., \frac 1{\sqrt g},0)\in \RR^{2g}$$ belongs to each $C_i$, and  each pair $C_i$ and $C_j$ intersect only at $v$ for $i\ne j$.
Therefore the union $\bigcup_{i=1}^g C_i$ provides an embedding of $B_g\to \RR^{2g}$.

(b) For each $i\in \{1,2,...,g\}$ and each $\sigma\in S_g$, we define  $$\iota_i, \tau_\sigma :\RR^{2g}\to \RR^{2g}$$ as below:
$$\iota_i(x_1, y_1, ... , x_{i-1}, y_{i-1}, x_i, y_i, x_{i+1}, y_{i+1},..., x_g, y_g)$$
$$=(x_1, y_1, ... , x_{i-1}, y_{i-1}, x_i, -y_i, x_{i+1}, y_{i+1},..., x_g, y_g).$$

$$\tau_\sigma(x_1, y_1, ... ,  x_i, y_i,..., x_g, y_g)$$
$$=(x_{\sigma(1)}, y_{\sigma(1)}, ... , , x_{\sigma(i)}, y_{\sigma(i)}, ,..., x_{\sigma(g)}, y_{\sigma(g)})$$

Clearly $\iota_i, \tau_\sigma\in O(2g)$,  $\iota_i$ is the identity on $C_j$ for $j\ne i$ and is an orientation reversing involution on $C_i$. 
and $\tau_\sigma(C_i)=C_{\sigma(i)}$. Moreover the group
 $$H=< \iota_i, \tau_{\sigma}: \,\, 1\le i \le g, \,\, \sigma \in S_g>$$
 is isomorphic to $(\ZZ_2)^g\rtimes S_g \cong \text{Sym}(B_g)$ and  acts faithfully on  the pair $(\RR^{2g}, B_g).$

(c)  For each $w\in B_g$, $w\in C_i$ for some $i$. By (13) 
we have $$|w|^2 = \sum_{j=1}^g(x_j^2+y_j^2)=\sum_{j=1, j\ne i}^g(x_j^2+y_j^2)+(x_i^2+y_i^2)$$
$$=\sum_{j=1, j\ne i}^g((\frac 1{\sqrt g})^2+0^2)+\frac 1g=1.$$
That is to say $B_g$ stays in the unit sphere $S^{2g-1}\subset \RR^{2g}$.

By the conclusion of (b),  the given $B_g\subset S^{2g-1}$ is an equivariant embedding.
\end{proof}

\begin{remark}\label{definitions} For given pair $(G, \Gamma)$, 
as we explained in the proof of  Proposition \ref{main2}, each $G$-equivariant embedding $\Gamma\to \RR^n$  provides a $G$-equivariant embedding $\Gamma\to S^n$. 
However a $G$-equivariant embedding  $\Gamma\to S^n$ does not guarantee a $G$-equivariant embedding  $\Gamma\to R^n$, see the example below. 
\end{remark}

\begin{example}
The maximum order of finite 
group action on a hyperbolic graph of rank $2$  is $12$, which 
is  realized uniquely by the action of  $\text{Sym}(M_3)=S_3\ltimes \ZZ_2$ on $M_3$ \cite{WZ}, 
where $M_3$ is the graph with two vertices joined by three edges, 
$S_3$ permutes three edges and fixes each vertex, and $\ZZ_2$ is an orientation-reversing involution on each edge.
There is an equivariant embedding $M_3\to S^2$, which is indicated by the left side of Figure 4.

\begin{figure}[h]
\includegraphics{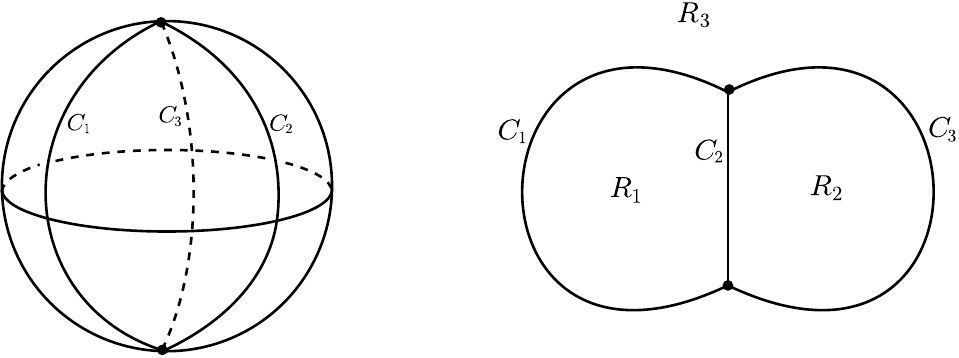}
\caption{An equivariant embedding $M_3\to S^2$ }
\end{figure}

There is no equivariant embedding $M_3\subset  \RR^2$. A quick proof uses Jordan curve theorem:
 For any embedding $ M_3\to \RR^2$,  $M_3$ divides $\RR^2$ into   two  bounded regions $R_1$, $R_2$, and one unbounded
 region $R_3$,  see the right side of Figure 4.
Then $C_2$, the unique edge neighboring two bounded regions, must be invariant under any $\tau\in O(2)$, therefore the embedding is  not equivariant.
\end{example}

\begin{remark}
A related question is  when  a finite group action $G$ on a graph $\Gamma$ can be $G$-equivariantly  embedded into  
$\RR^3$ ($S^3$), see \cite{FNPT}, 
\cite{WWZZ2}. Similar question for  surfaces is also addressed, see \cite{CC},  \cite{WWZZ1}.
\end{remark}

{\bf Acknowledgements.} We thank the referees for their suggestions which enhance the paper.


\begin{thebibliography}{HJSMM}
\bibitem[CC]{CC} W. Cavendish, J.H. Conway, {\it Symmetrically bordered surfaces.} Amer. Math. Monthly 117 (2010), no. 7, 571-580.

\bibitem[FNPT]{FNPT} E. Flapan, R. Naimi, J. Pommersheim, H. Tamvakis, {\it Topological symmetry groups of graphs embedded in the 3-sphere.} Comment. Math. Helv. 80 (2005), no. 2, 317-354. 

\bibitem[Mos]{Mos} G. D. Mostow,  {\it Equivariant embeddings in Euclidean space.}  Ann. of Math. (2) 65 (1957), 432-446.

\bibitem[Pa]{Pa} R. Palais, {\it Imbedding of compact, differentiable transformation groups in orthogonal representations.} J. Math. Mech. 6 (1957), 673-678.

\bibitem[WWZZ1]{WWZZ1} C. Wang, S.C. Wang, Y.M. Zhang, B. Zimmermann, {\it Embedding compact surfaces into the 3-dimensional Euclidean space with maximum symmetry} . Sci. China Math. 60 (2017), no. 9, 1599-1614. 

\bibitem[WWZZ2]{WWZZ2} C. Wang, S.C. Wang, Y.M. Zhang, B. Zimmermann, {\it Graphs in the 3-sphere with maximum symmetry.} Discrete Comput. Geom. 59 (2018), no. 2, 331-362. 

\bibitem[WZ]{WZ} S.C. Wang, B. Zimmermann,  {\it The maximum order of finite groups of outer automorphisms of free groups.} Math. Z. 216 (1994), no. 1, 83-87.

\bibitem[Zi]{Zi} B. Zimmermann, {\it On large groups of symmetries of finite graphs embedded in spheres}. J. Knot Theory Ramifications 27 (2018), no. 3, 1840011, 8 pp.

\end{thebibliography}
\end{document}